\documentclass[12pt,a4paper]{article}

\usepackage{amsmath,amstext,amssymb,amscd,euscript}
 \usepackage{graphicx}

\usepackage[english]{babel}
\usepackage[cp1251]{inputenc}

\oddsidemargin=5mm
 \textwidth=155mm
  \textheight=235mm
  \topmargin=-10mm

\sloppy

\newcommand{\Xcomment}[1]{}
\newtheorem{theorem}{Theorem}[section]
\newtheorem{lemma}[theorem]{Lemma}
\newtheorem{corollary}[theorem]{Corollary}
\newtheorem{prop}[theorem]{Proposition}

\newenvironment{proof}{\noindent{\bf Proof}\/}%
{\hfill$\qed$\medskip}

\def\qed{\Box}

\makeatletter \@addtoreset{equation}{section} \makeatother

\newenvironment{numitem1}{\refstepcounter{equation}\begin{enumerate}%
\item[(\thesection.\arabic{equation})]}{\end{enumerate}}

\newcommand{\refeq}[1]{(\ref{eq:#1})}  

\def\rest#1{_{\,\vrule height 1.6ex width 0.05em depth 0pt\, #1}}

 \makeatletter
\renewcommand{\section}{\@startsection{section}{1}{0pt}%
{-3.5ex plus -1ex minus -.2ex}{2.3ex plus .2ex}%
{\normalfont\Large}}
 \makeatother

 \makeatletter
\renewcommand{\subsection}{\@startsection{subsection}{2}{0pt}%
{-3.0ex plus -1ex minus -.2ex}{1.5ex plus .2ex}%
{\normalfont\normalsize\bf}}
 \makeatother

 \newcommand{\SEC}[1]{\ref{sec:#1}}  

\def\Rset{{\mathbb R}}

\def\Ascr{{\cal A}}
\def\Bscr{{\cal B}}

\def\Escr{{\cal E}}

\def\Gscr{{\cal G}}

\def\Sscr{{\cal S}}

\def\Vscr{{\cal V}}

\def\tilde{\widetilde}

\def\rest#1{_{\,\vrule height 1.5ex width 0.05em depth 0pt\, #1}}






\begin{document}

\baselineskip=15pt
\parskip=3pt

\title{On diversifying stable assignments}

\author{Alexander V.~Karzanov%
\thanks{Central Institute of Economics and Mathematics of
the RAS, 47, Nakhimovskii Prospect, 117418 Moscow, Russia; email:
akarzanov7@gmail.com.}
}
  
\date{}
 
 \maketitle

\begin{abstract}
We consider the stable assignment problem on a graph with nonnegative real capacities on the edges and quotas on the vertices, in which the preferences of agents are given via \emph{diversifying} choice functions. We prove that for any input of the problem, there exists exactly one stable assignment, and propose a polynomial time algorithm to find it.
 \medskip
 
\noindent\emph{Keywords}: stable marriage, stable assignment, choice function, diversification
 \end{abstract}


\section{Introduction}  \label{sec:intr}

Starting from the classical work by Gale and Shapley~\cite{GS} on stable marriages in bipartite graphs, there have appeared an immense number of researches of many authors devoted to various models of stability on graphs, and wider. Seemingly one of the most general stability problems on bipartite graphs was introduced and well studied by Alkan and Gale~\cite{AG} who, in the framework of their model, proved the existence of stable solutions and other nice properties. 

In their model, the preferences of agents (identified with vertices of the graph) are defined by use of choice functions of a rather wide spectrum (merely obeying the standard rules of consistence, persistence, and size monotonicity). To illustrate this setting, two particular examples are exposed (Examples~1 and ~2 in Section~2 of~\cite{AG}), both dealing with nonnegative real-valued capacities on the edges and quotas on the vertices. The first one is the well-known stable \emph{allocation} model introduced by Baiou and Balinski~\cite{BB}, and the second one involves the so-called \emph{diversifying} choice functions.

In this note we just consider the second, diversifying, model and show that for any graph (including non-bipartite ones), capacities and quotas, there exists a unique stable solution (which looks somewhat surprising). We also develop a strongly polynomial algorithm to find this solution.

The content is organized as follows. Section~\SEC{prelim} contains basic definitions and reviews known results needed to us. Section~\SEC{main} states the  result on uniqueness for diversifying two-sided markets and gives a proof (which is relatively short). An efficient algorithm to find a stable assignment in the bipartite case is described in Section~\SEC{begin}. The final Section~\SEC{general} is devoted to a general (non-bipartite) case.


\section{Preliminaries}  \label{sec:prelim}

Throughout we consider the \emph{diversifying} model of stable assignments as exposed in Example~2 from Sect.~2 in Alkan-Gale's paper~\cite{AG}.

We start with definitions and settings. We are given a bipartite graph $G=(V,E)$ with parts (color classes)  $F$ and $W$, conditionally called \emph{firms} and \emph{workers}, respectively. The edges $e\in E$ are equipped with \emph{capacities} (upper bounds) $b(e)\in\Rset_+$, and the  vertices $v\in V$ with \emph{quotas} $q(v)\in\Rset_+$. For a vertex $v\in V$, the set of its incident edges is denoted by $E_v$. In general, we do not impose any relations between capacities and quotas, such as $b(E_v)\ge q(v)$ for $v\in V$ or $q(F)=q(W)$. (Hereinafter, for a function $f:S\to \Rset$ and a finite subset $S'\subseteq S$, $f(S')$ denotes the sum $\sum(f(e)\colon e\in S')$. Also when $S$ is finite, we denote $\sum(|f(e)|\colon e\in S)$ as $|f|$.)

The set of \emph{admissible assignments} in the model is defined to be the full box $\Bscr:=\{x\in\Rset_+^E\colon x\le b\}$. For a vertex (``agent'')  $v\in V$, the restriction of an assignment $x\in \Bscr$ to the set $E_v$ is denoted as $x_v$, and the set of these restrictions is denoted as $\Bscr_v$. On each of these sets, there is a \emph{choice function} (CF)  $C_v:\Bscr_v\to\Bscr_v$ which acts as follows: for each $z\in \Bscr_v$, there holds $C_v(z)\le z$, and
  \begin{numitem1} \label{eq:z-q}
if $|z|\le q(v)$, then $C_v(z)=z$, and if $|z|>q(v)$, then one takes the number  $r=r^z$ (the \emph{cutting height}) such that $\sum(r\wedge z(e)\colon e\in E_v)=q(v)$, and one puts
  $$
  C_v(z)(e):=r\wedge z(e), \quad e\in E_v.
  $$
  \end{numitem1}
A CF $C=C_v$ of this sort is called \emph{diversifying} in~\cite{AG}. It is noted there (and this is not difficult to check) that it satisfies the conditions of \emph{consistence} (saying that for admissible $z,z'\in \Bscr_v$, if $C(z)\le z'\le z$ then $C(z')=C(z)$) and \emph{persistence} (saying that $z'\le z$ implies $C(z)\wedge z'\le C(z')$). (Hereinafter, for $x,y\in \Rset^S$, $x\wedge y$ denotes the coordinate-wise ``meet'', taking the values $\min(x(e),y(e))$ for $e\in S$, and $x\vee y$ denotes the ``join'', taking the values $\max(x(e),y(e))$.)  As a consequence, $C$ has the property of \emph{stationarity} $C(z\vee z')=C(C(z)\vee z')$ for any $z,z'\in \Bscr_v$, in particular, implying  that $C(C(z))=C(z)$. Also $C$ satisfies a strengthened version of  \emph{size-monotonicity} (where $z\le z'$ implies $|z|\le|z'|$), namely, the \emph{quota-filling} one (saying that $|C(z)|=q(v)$ if $|z|\ge q(v)$, and $C(z)=z$ otherwise).

The assignments $z\in\Bscr_v$ with $z=C(z)$ are called \emph{rational} and the set of these is denoted by $\Ascr_v$. In accordance with the general Alkan-Gale's model, we say that $z\in\Ascr_v$ is (revealed) \emph{preferred} to $z'\in\Ascr_v$, and denote this as  $z\succeq z'$, if 
  $$
  C(z\vee z')=z.
  $$
This relation is transitive and determines a lattice on  $\Ascr_v$ with the join operation $\curlyvee$ of the form $z\curlyvee z'=C(z\vee z')$.

Let us specify the relation $\succeq$ in our case. We call $z\in\Ascr_v$ \emph{fully filling} if $|z|=q(v)$. For such a $z$, we partition $E_v$ into two subsets: the \emph{head} (arg-maximum) $H(z):=\{e\in E_v\colon  z(e)=r^z\}$ and the \emph{tail} (the rest) $T(z):=E_v-H(z)$, where $r^z=\max(z(e)\colon e\in E)$. In case $|z|<q$, we formally define $H(z):=\emptyset$ and $T(z):=E_v$. One can see that
  \begin{numitem1} \label{eq:zzp}
for $z,z'\in\Ascr_v$, when $z$ if fully filling, $z\succeq z'$ holds if and only if  $z(e)\ge z'(e)$ for all $e\in T(z)$; if, in addition, $z'$ is fully filling either, then $r^z\le r^{z'}$ and $H(z)\supseteq H(z')$.
 \end{numitem1}
(Indeed, taking into account that $|z|=q(v)\ge |z'|$, one can observe from~\refeq{z-q} that $C(z\vee z')=z$ implies $z\rest{T(z)}\ge {z'}\rest{T(z)}$, and vice versa.)
 \medskip
 
\noindent\textbf{Definition.} An edge $e\in E_v$ is called \emph{satiated} (or \emph{non-interesting}) for the agent $v$ with respect to an assignment $z\in\Ascr_v$ if $e\in H(z)\cup U(z)$, where $U(z)$ denotes the set of edges $e\in E_v$ with the attained upper bound: $z(e)=b(e)$. 
 \medskip
 
\noindent(In other words, $e$ is interesting for  $v$ (from the viewpoint of a possible replacement of $z$ by a more preferred assignment) if $e$ is contained in the tail $T(z)$ and, at the same time, $z(e)$ is lower than the upper bound.)
 
Now from individual agents we come to assignments on the whole edge set $E$. An admissible assignment $x\in\Bscr$ is said to be rational if this is so for all agents, i.e. $x_v\in\Ascr_v$ for all $v\in V$; the set of these assignments is denoted by $\Ascr$. 
\medskip

\noindent\textbf{Definitions.} Let $x\in\Ascr$. An edge $e=ij\in E$ is said to be \emph{blocking} for $x$ if $e$ is non-satiated for both agents $i$ and $j$ (when considering the restrictions $x_i$ on $\Ascr_i$ and $x_j$ on $\Ascr_j$). When there is no blocking edge for $x$, the assignment $x$ is called \emph{stable}.
 \medskip
 
The set of stable assignments is denoted by $\Sscr=\Sscr(G,b,q,C)$.
In our case the stability of $x$ is specified as follows:
  \begin{numitem1} \label{eq:stabx}
for an edge $e=ij\in E$, if $e\in T(x_i)-U(x_i)$ (that is $e$ is non-satiated for $i$), then $x_j$ is fully filling and $e\in H(x_j)$ (whence $e$ is satiated for $j$), and similarly by swapping $i$ and $j$.
 \end{numitem1}
 
For stable $x,y\in\Sscr$, if the preference relation $x_v\succeq y_v$ takes place for all firms $v\in F$ (all workers $v\in W$), we write $x\succeq_F y$ (respectively,  $x\succeq_W y$). The general theorem on stability for two-sided markets by Alkan and Gale in~\cite{AG} can be specified for our case with diversifying choice functions of all agents as follows:
  \begin{numitem1} \label{eq:stab-div}
the set $\Sscr$ of stable assignments is nonempty, and the relation $\succeq_F$ determines a lattice structure on $\Sscr$ having maximal and minimal elements; in addition:
 \begin{itemize}
\item[(a)] the order $\succeq_F$ is inverse to the order $\succeq_W$; namely, if  $x\succeq_F y$, then $y\succeq_W x$, and vice versa;
\item[(b)] for each $v\in V$, the size of a stable assignment at $v$ is invariant; that is, the value $|x_v|$ is the same for all $x\in \Sscr$.
 \end{itemize}
 \end{numitem1}
 
Note that property~(b) has a refinement  (Corollary~3 in~\cite{AG}), which will be useful for us; namely:
 \begin{numitem1} \label{eq:constant}
for an agent $v\in V$, if CF $C_v$ is quota-filling, and if a stable assignment $x$ is not fully filling at $v$ (i.e. $|x_v|<q(v)$), then all stable assignments coincide within $E_v$.
 \end{numitem1}

\section{Theorem} \label{sec:main}

In this section we show the following
  \begin{theorem} \label{tm:unique}
In the diversifying two-sided (bipartite) market as above, the set $\Sscr$ of stable assignments consists of a unique element.
  \end{theorem}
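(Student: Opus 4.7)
My plan is to reduce the statement to showing uniqueness of the vertex-cutoffs $r^{x_v}:=\max(x(e)\colon e\in E_v)$ (with $r^{x_v}:=+\infty$ when $v$ is not fully filled). The preliminary observation is the closed-form description $x(e)=\min(r^{x_i},r^{x_j},b(e))$ for any stable $x$ and any edge $e=ij$: the inequality $x(e)\le\min(r^{x_i},r^{x_j},b(e))$ is immediate from rationality and admissibility, and the reverse follows from~\refeq{stabx}, since a strict inequality $x(e)<\min(r^{x_i},r^{x_j},b(e))$ would place $e$ simultaneously in $T(x_i)-U(x_i)$ and $T(x_j)-U(x_j)$, making it blocking.

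Given this, take $x,y\in\Sscr$; using~\refeq{stab-div} I may assume $x\succeq_F y$ and hence $y\succeq_W x$. Then~\refeq{zzp} translates these preferences into $r^{x_i}\le r^{y_i}$ on firms and $r^{x_j}\ge r^{y_j}$ on workers, with equalities at non-fully-filled vertices by~\refeq{constant}. Set $V^-:=\{v:r^{x_v}<r^{y_v}\}\subseteq F$ and $V^+:=\{v:r^{x_v}>r^{y_v}\}\subseteq W$; the aim is to show both are empty. The case $V^-=\emptyset$, $V^+\ne\emptyset$ is short: for $j\in V^+$ we have $r^{x_i}=r^{y_i}$ on every neighbouring firm, and the equality $|x_j|=|y_j|$ forces every summand in $\sum_e(\min(r^{x_i},r^{x_j},b(e))-\min(r^{x_i},r^{y_j},b(e)))$ to vanish (they are all nonnegative since $r^{x_j}>r^{y_j}$); this gives $r^{y_j}\ge\min(r^{x_i},b(e))$ for every $e=ij$, whence $r^{y_j}\ge\max_e\min(r^{x_i},b(e))=r^{x_j}$, contradicting $j\in V^+$.

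The main case is $V^-\ne\emptyset$, which I handle by an extremal argument. Pick $v_0\in V^-$ minimizing $\rho:=r^{x_{v_0}}$. Since $|x_{v_0}|=|y_{v_0}|$ by~\refeq{stab-div}(b) and $r^{x_{v_0}}<r^{y_{v_0}}$, the sum $\sum_{e\ni v_0}(x(e)-y(e))=0$ must contain a strictly positive summand $x(e^*)>y(e^*)$; a quick case check on which of $\rho,r^{x_j},b(e^*)$ realizes the minimum defining $x(e^*)$ shows that any such edge $e^*=v_0j^*$ satisfies $j^*\in V^+$ and $r^{y_{j^*}}<\rho$. The payoff comes at $j^*$: for each neighbouring firm $i$, if $i\notin V^-$ then $r^{x_i}=r^{y_i}$ together with $r^{x_{j^*}}>r^{y_{j^*}}$ already gives $x(e)\ge y(e)$; if $i\in V^-$, the minimality of $\rho$ yields $r^{x_i}\ge\rho>r^{y_{j^*}}$, which combined with $r^{x_{j^*}}>r^{y_{j^*}}$ produces the uniform lower bound $x(e)=\min(r^{x_i},r^{x_{j^*}},b(e))\ge\min(r^{y_{j^*}},b(e))\ge y(e)$. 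Thus $\sum_{e\ni j^*}(x(e)-y(e))>0$, contradicting $|x_{j^*}|=|y_{j^*}|$. The hard part is precisely this uniform-sign step at $j^*$---the choice of $v_0$ as the minimizer of $r^x$ on $V^-$ is what makes the bound work simultaneously on all edges incident to $j^*$.
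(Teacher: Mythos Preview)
Your proof is correct and takes a genuinely different route from the paper's.

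The paper argues by building a cycle. Starting from any edge $e\in E^+:=\{e:x(e)<y(e)\}$, it uses the size-invariance~\refeq{stab-div}(b) at each endpoint to find adjacent edges in $E^-:=\{e:x(e)>y(e)\}$, derives from~\refeq{zzp} a strict chain of inequalities on the $x$-values along the resulting alternating path, and then closes the path into a cycle to reach the contradiction $x(e_1)<x(e_1)$.

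Your argument replaces the cycle by an extremal-vertex step. The closed-form identity $x(e)=\min(r^{x_i},r^{x_j},b(e))$ reduces uniqueness of $x$ to uniqueness of the cutoffs $r^{x_v}$, and the choice of $v_0$ minimizing $r^x$ over $V^-$ is exactly what forces a uniform sign on all edges at the neighbouring worker $j^*\in V^+$. This avoids the cycle-extraction and makes the two appeals to~\refeq{stab-div}(b) (once at $v_0$ to locate $e^*$, once at $j^*$ for the contradiction) very explicit. The paper's version, by contrast, never isolates the cutoffs; it works directly with edge values and is marginally shorter once~\refeq{zzp} is available. Your closed-form description is a pleasant byproduct that the paper does not state.

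One small point: in the side case $V^-=\emptyset$, $V^+\ne\emptyset$, the claimed equality $\max_e\min(r^{x_i},b(e))=r^{x_j}$ need not hold (the left-hand side can exceed $r^{x_j}$); but you only use the inequality $\ge$, which is valid since it is witnessed by any edge $e^*\in E_j$ with $x(e^*)=r^{x_j}$. So the conclusion $r^{y_j}\ge r^{x_j}$, and hence the contradiction, is unaffected.
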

  \begin{proof}
~An edge connecting vertices $i\in F$ and $j\in W$ will be denoted as $ij$. 

We know that $\Sscr$ is nonempty (by~\refeq{stab-div}). Suppose that $|\Sscr|> 1$. Then there are two comparable stable assignments $x\succ_F y$ (for example, the most and least preferred ones for $F$). Define 
  $$
  E^+:=\{e\in E\colon x(e)<y(e)\}\quad\mbox{and}\quad  E^-:=\{e\in E\colon x(e)>y(e)\}.
  $$
  
From~\refeq{constant} it follows that each edge in $E^+\cup E^-$ connects two fully filled vertices. 

Choose an edge $e=ij\in E^+$. By the invariance property~\refeq{stab-div}(b) applied to the vertices $i$ and $j$, there exist edges $e'=ij'\in E^-$ and $e''=i''j\in E^-$. Considering~\refeq{zzp} for the restrictions $z=x_i$ and $z'=y_i$ (which satisfy $x_i\succ y_i$) and the edges $e$ and $e'$, we can conclude from the inequalities $x(e)<y(e)$ and $x(e')>y(e')$ that 
  \begin{equation} \label{eq:epe}
  x(e')\le x(e)\quad \mbox{and} \quad y(e')<y(e).
  \end{equation}
Similarly, applying~\refeq{zzp} to the functions $z=y_j$ and $z'=x_j$ (satisfying $y_j\succ x_j$, in view of $y\succ_W x$, by the polarity  property~\refeq{stab-div}(a)) and to the edges $e$ and $e''$, we can conclude from $x(e)<y(e)$ and $x(e'')>y(e'')$ that
  \begin{equation} \label{eq:eepp}
  x(e)< x(e'')\quad \mbox{and} \quad y(e)\le y(e'').
  \end{equation}

The sequence $e',e,e''$ can be extended in both directions as long as wished. As a result, one can extract a cycle
 $$
 e_1,e_2,\ldots,e_i,\ldots,e_{2k},e_{2k+1}=e_1,
 $$
in which the edges with odd indices belong to $E^-$, and the ones with even indices belong to $E^+$, and, in addition, for each $i$, the common vertex for  $e_i$ and $e_{i+1}$ is contained in the part $F$ for $i$ odd, and in $W$ for $i$ even. Then, using inequalities as in~\refeq{epe} and~\refeq{eepp}, we obtain the sequence of inequalities
  $$
  x(e_1)\le x(e_2)<x(e_3)\le x(e_4)<\cdots \le x(e_{2k})< x(e_{2k+1}=e_1),
  $$
which leads to a contradiction, yielding the result.
  \end{proof}
 

\section{An efficient construction}  \label{sec:begin}

The basic method of finding a stable assignment for our diversifying model (which is unique by Theorem~\ref{tm:unique}) developed in this section resembles the method of proving the existence of a stable solution in a general case presented in~\cite[Sect.~3.1]{AG}. It consists of a sequence of iterations, and each iteration is divided into two \emph{phases}. All functions on $E$ arising during the process are admissible (bounded by the capacities $b$) but need not satisfy some quotas in the part $W$. 

At the \emph{1st phase} of each iteration, a current function $x$ on $E$ is updated by use of transformations carried out independently in the restrictions $x_i=x\rest{E_i}$ for all $i\in F$.

We describe such a transformation, considering a firm $i\in F$ and denoting the current admissible function on $E_i$ as $z$.  For this $z$, we assume that the following hold: 
 \begin{numitem1} \label{eq:zL}
$|z|\le q(i)$, and in the set $E_i$, a certain subset $L=L_i$ is distinguished.
  \end{numitem1}
 
The procedure preserves the values of $z$ within $L$ and updates $z$ on $E_i-L$ toward increasing by the following rules: first we assign $y=y_i\in\Rset^{E_i-L}$  according to the upper bounds:  $y(e):=b(e)$ for all $e\in E_i-L$, and then we cut $y$ at the maximal height $r$ that provides that the quota $q(i)$ on $E_i$ is not exceeded; namely (cf.~\refeq{z-q}):
  \begin{numitem1} \label{eq:cut-y}
take $r$ such that $\sum(r\wedge y(e)\colon e\in E_i-L)=q(i)-z(L)$, and put $\tilde z(e):= r\wedge y(e)$ for all $e\in E_i-L$ and $\tilde z(e):=z(e)$ for all $e\in L$.
 \end{numitem1}
 
The obtained function $\tilde z$ either is fully filling: $|\tilde z|=q(i)$, or not: $|\tilde z|<q(i)$, and in the latter case the equality $\tilde z(e)=b(e)$ holds for all $e\in E_i-L$.

Taking together the functions $\tilde z=\tilde z_i$ over $i\in F$, we obtain a new admissible function $\tilde x$ on the whole $E$. It satisfies $\tilde x\ge x$, but for some vertices $j$ in the part $W$, the quota may be exceeded, i.e. $|\tilde x_j|>q(j)$ may happen.

The arisen excesses are eliminated at the \emph{2nd phase} of the iteration. More precisely, for each vertex $j\in W$ such that $|\tilde x_j|>q(j)$, we properly reduce (cut down) the values of  $\tilde x_j$ according to the quota $q(j)$ (acting as in~\refeq{z-q}). As a result, we obtain an admissible function $x'$ on $E$ satisfying the quotas $q(v)$ for all $v\in V$; it coincides with $\tilde x$ on the subsets $E_j$ ($j\in W$) where $|\tilde x_j|\le q(j)$ took place before, and is fully filling for the other $j\in W$. However, $x'$ may be non-stable. For the purposes of the next iteration, the following addition actions are fulfilled:
  \begin{numitem1} \label{eq:addL}
If, due to the cutting operation, a decrease in some edge $e=ij$ happened, i.e.  $x'(e)<\tilde x(e)$ (and therefore, $e$ is added to the head $H(x'_j)$), then the edge $e$ is inserted in the current set $L_i$ (if it was not inserted in it on preceding iterations).
  \end{numitem1}
 
In the beginning of the process, we put $x:= 0$ and $L_i:=\emptyset$ for all $i\in F$. Then at the 1st phase of the 1st iteration, all intermediate functions  $y_i$ attain the upper bounds, and the phase constructs an admissible assignment $\tilde x$ on $E$ satisfying the quotas for all $i\in F$ and such that 
  \begin{numitem1} \label{eq:init-x}
for each $i\in F$, each edge $e$ in the tail $T(\tilde x_i)$ is satiated ($\tilde x(e)=b(e)$). 
\end{numitem1} 

(E.g., in the special case with $b(e)\ge q(i)/|E_i|$ for all $e\in E_i$, the cutting height $r$ is just equal to the number $q(i)/|E_i|$, and the head $H(\tilde x_i)$ embraces the whole $E_i$.)

The process terminates as soon as the function $\tilde x$ constructed at the 1st phase of the current iteration becomes satisfying the quotas $q(j)$ for all $j\in W$. In order to analyze  the convergence of the process and show the stability of the resulting function, we need two lemmas. These lemmas deal with two consecutive iterations, say, $k$th and $(k+1)$th ones, denote by $x$ and $\tilde x$ the functions in the beginnings of the 1st and 2nd phases of $k$th iterations, respectively, and denote by $x'$ and $\tilde x'$ similar functions for $(k+1)$th iteration.
 \begin{lemma} \label{lm:xpj-xj}
For each $j\in W$, there holds $x'_j\succeq x_j$.
 \end{lemma}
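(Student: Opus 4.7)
My plan is to apply the preference characterization \refeq{zzp} at the worker $j$, after splitting into two cases according to whether Phase~2 of the $k$th iteration actually modifies $\tilde x$ on $E_j$. I would rely on two facts from the surrounding text: the pointwise monotonicity $\tilde x\ge x$ produced by Phase~1, and the rationality of $x_j$ and $x'_j$, inherited inductively from the trivially rational starting point $x=0$.

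In the easy case $|\tilde x_j|\le q(j)$, Phase~2 leaves $\tilde x_j$ untouched, so $x'_j=\tilde x_j\ge x_j$. Hence $x'_j\vee x_j=x'_j$, and since $x'_j$ is rational, $C_j(x'_j\vee x_j)=C_j(x'_j)=x'_j$, which is exactly $x'_j\succeq x_j$ by the definition of $\succeq$.

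In the main case $|\tilde x_j|>q(j)$, the assignment $x'_j=C_j(\tilde x_j)$ is fully filling, with cutting height $r=r^{\tilde x_j}$. By the description of the cut, every edge $e\in T(x'_j)$ satisfies $x'_j(e)=\tilde x_j(e)<r$; combining this with $\tilde x_j(e)\ge x_j(e)$ yields $x'_j(e)\ge x_j(e)$ on the whole tail. Since $x'_j$ is fully filling and $|x'_j|=q(j)\ge |x_j|$, \refeq{zzp} then delivers $x'_j\succeq x_j$.

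The step that I expect to need the most care is the prerequisite $\tilde x\ge x$, which the paper asserts in the course of describing Phase~1 but which silently uses the update rule \refeq{addL}: any edge whose value was cut at some worker in an earlier iteration is inserted into the relevant $L_i$ and is thereby held fixed, rather than possibly lowered, by subsequent Phase-1 refills. Once this monotonicity is in hand, the lemma itself follows immediately from the two-case analysis above.
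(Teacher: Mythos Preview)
Your proof is correct and follows essentially the same route as the paper's own argument: use the pointwise monotonicity $\tilde x\ge x$ from Phase~1, observe that on the tail $T(x'_j)$ the cut leaves $x'_j(e)=\tilde x_j(e)\ge x_j(e)$, and invoke~\refeq{zzp}. You are simply more explicit than the paper, which handles only the cutting case $|\tilde x_j|>q(j)$ in words and leaves the case $|\tilde x_j|\le q(j)$ implicit; your separate treatment of that case via $C_j(x'_j\vee x_j)=C_j(x'_j)=x'_j$ is the right way to close it, and your remark that the inequality $\tilde x\ge x$ tacitly relies on the bookkeeping rule~\refeq{addL} is a correct and worthwhile observation.
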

   \begin{proof}
This follows from the fact that the 1st phase of $k$th iteration constructs $\tilde x\ge x$ (the usual coordinate-wise comparison), and therefore, under the cutting operation with $\tilde x_j$ at the 2nd phase (in case $|\tilde x_j|> q(j)$), we obtain $x'(e)=\tilde x(e)\ge x(e)$ for any $e\in T(x')$ (which implies $H(x'_j)\supseteq H(x_j)$, in view of $|x'_j|=q(j)\ge|x_j|$).
  \end{proof}
  \begin{lemma} \label{lm:LjLj}
Suppose that in the beginning of $k$th iteration, for all $i\in F$, each edge $ij\in L_i$ belongs to the head $H(x_j)$. Then a similar property is valid for $(k+1)$th iteration as well.
  \end{lemma}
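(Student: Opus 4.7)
The plan is to show the property for the updated set $L_i$ at the beginning of iteration $k{+}1$ by partitioning it into two parts: (i) edges that already belonged to $L_i$ at the beginning of iteration $k$, and (ii) edges that were appended during the 2nd phase of iteration $k$ via rule~\refeq{addL}. I would handle these two cases separately.

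For a newly appended edge $ij$: by~\refeq{addL}, appending occurs precisely when the cutting operation at worker $j$ in the 2nd phase strictly decreased the value at $ij$, i.e.\ $x'(ij)<\tilde x(ij)$. Tracing the cutting rule (modeled on~\refeq{z-q}), this is possible only if $\tilde x_j(ij)$ exceeded the cutting height $r=r^{x'_j}$, and then $x'(ij)=r$. By the very definition of the head of a fully filling assignment, $ij\in H(x'_j)$, which is what we need.

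For an edge $ij$ that was already in $L_i$ at the beginning of iteration $k$: by the inductive hypothesis, $ij\in H(x_j)$. In particular $H(x_j)\ne\emptyset$, so by the convention adopted in Section~\SEC{prelim} the restriction $x_j$ is fully filling, i.e.\ $|x_j|=q(j)$. I would then invoke Lemma~\ref{lm:xpj-xj} to obtain $x'_j\succeq x_j$, combined with the quota-filling property of $C_j$ (applied to $x'_j=C(x'_j\vee x_j)$ and $|x'_j\vee x_j|\ge q(j)$) to conclude that $x'_j$ is fully filling as well. At this point the assertion~\refeq{zzp} applies: for two fully filling elements with $x'_j\succeq x_j$ one has $H(x'_j)\supseteq H(x_j)$, and therefore $ij\in H(x'_j)$.

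The main technical point (and really the only place where something nontrivial happens) is the transition from $x_j$ fully filling to $x'_j$ fully filling, which is the bridge needed to apply the head-inclusion clause of~\refeq{zzp}; this relies on the quota-filling property of the diversifying CF together with Lemma~\ref{lm:xpj-xj}. Once that is in hand, both cases above collapse into a single one-line verification, completing the induction step.
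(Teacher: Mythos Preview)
Your proposal is correct and follows essentially the same route as the paper: both split $L'_i$ into the old edges $L_i$ and the newly appended ones $L'_i-L_i$, handle the new edges directly from the cutting rule~\refeq{addL}, and handle the old edges via Lemma~\ref{lm:xpj-xj} together with the head-inclusion clause of~\refeq{zzp}. Your extra care in verifying that $x'_j$ is fully filling (so that~\refeq{zzp} indeed applies) is a point the paper leaves implicit.
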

  \begin{proof}
Let us denote the corresponding $L$-sets for $(k+1)$th iteration with primes (keeping the previous notation for $k$th iteration). Under the construction, if $e=ij\in L_i$, then $e\in L'_i$. By the supposition in the lemma, we have $e\in H(x_j)$. Also there holds $H(x'_j)\supseteq H(x_j)$ (in view of $x'_j\succeq x_j$, by Lemma~\ref{lm:xpj-xj}). Therefore, $e\in H(x'_j)$. And if $e=ij\in L'_i-L_i$, then at the 2nd phase of $k$th iteration, the current function decreases at $e$, implying that $e$ must be added to $H(x'_j)$.
  \end{proof}
  \begin{corollary} \label{cor:stab}
In the assumption that the process terminates in finite time, the resulting function $\tilde x$ is a stable assignment.
  \end{corollary}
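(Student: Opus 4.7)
Let $\tilde x$ be the assignment produced at the 1st phase of the terminating iteration~$k$. Rationality of $\tilde x$ is immediate from~\refeq{z-q}: for $i \in F$ the construction~\refeq{cut-y} enforces $|\tilde x_i| \le q(i)$, and for $j \in W$ the termination condition is precisely $|\tilde x_j| \le q(j)$, so the first clause of~\refeq{z-q} yields $C_v(\tilde x_v) = \tilde x_v$ at every vertex. The real work is to rule out blocking edges, and I would argue by contradiction: suppose $e = ij$ is non-satiated for both endpoints.

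The first step is to show $e \in L_i$. If instead $e \in E_i \setminus L_i$, then~\refeq{cut-y} sets $\tilde x(e) = r \wedge b(e)$ for the cutting height $r = r_i^{(k)}$: either $\tilde x(e) = b(e)$, putting $e$ into $U(\tilde x_i)$, or $\tilde x(e) = r < b(e)$, which forces $|\tilde x_i| = q(i)$ (the non-filling branch of~\refeq{cut-y} would otherwise set $\tilde x(e) = b(e)$) and places $e$ into $H(\tilde x_i)$ provided $r$ equals $\max_{e'} \tilde x_i(e')$. Either way $e$ is satiated for $i$, contradicting the hypothesis.

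The step ``$r$ is the maximum of $\tilde x_i$'' relies on an auxiliary invariant---the main technical point---that $x(e) \le r_i^{(k)}$ for every $e \in L_i$ at the start of each iteration~$k$, so that after phase~1 every value of $\tilde x_i$ over $E_i$ is bounded by $r_i^{(k)}$. I would prove this by induction on~$k$ from the vacuous base $L_i^{(1)} = \emptyset$; the inductive step uses that phase~1 produces values $\le r_i^{(k)}$ and phase~2 only decreases them, combined with the monotonicity $r_i^{(k+1)} \ge r_i^{(k)}$. That monotonicity follows from comparing the defining equations of~\refeq{cut-y} at iterations $k$ and $k+1$, using that edges freshly added to $L_i$ at phase~2 of iteration~$k$ carry values strictly below their phase~1 value $r_i^{(k)} \wedge b(e)$.

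Once $e \in L_i$ is secured, I would invoke Lemma~\ref{lm:LjLj} inductively (its hypothesis being trivial at the first iteration where $L_i^{(1)} = \emptyset$) to deduce $e \in H(x_j)$ at the start of iteration~$k$, whence $|x_j| = q(j)$. Combining $\tilde x \ge x$ from the proof of Lemma~\ref{lm:xpj-xj} with the termination condition $|\tilde x_j| \le q(j) = |x_j|$ forces $\tilde x_j = x_j$ pointwise, so $H(\tilde x_j) \ni e$ and $e$ is satiated for $j$, contradicting the hypothesis once more. Therefore $\tilde x$ has no blocking edge and is a stable assignment.
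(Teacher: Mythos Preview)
Your argument is correct and follows the same route as the paper: an edge not satiated for $i$ must lie in $L_i$, and then Lemma~\ref{lm:LjLj} together with $\tilde x\ge x$ and the termination condition $|\tilde x_j|\le q(j)$ force $\tilde x_j=x_j$ and hence $e\in H(\tilde x_j)$. You are in fact more careful than the paper on one point: the paper simply asserts that ``from the construction of $\tilde x$ it follows that $e$ belongs to the current set $L_i$, or $\tilde x(e)$ attains the upper bound $b(e)$'', tacitly assuming that the frozen values on $L_i$ never exceed the phase-1 cutting height $r_i$ (so that $r_i$ really is the maximum of $\tilde x_i$); your auxiliary invariant, together with the monotonicity $r_i^{(k)}\le r_i^{(k+1)}$, supplies exactly the justification the paper omits.
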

 \begin{proof} ~When the process terminates, the current function $\tilde x$ is an admissible assgnment satisfying the quotas for all vertices in $G$. We consider an arbitrary edge $e=ij\in E$ and show that it is not blocking for $\tilde x$.

To show this, we may assume that $e$ does not belong to the head $H(\tilde x_i)$ (otherwise $e$ is satiated for $j$ and we are done). Then from the construction of $\tilde x$ it follows that $e$ belongs to the current set $L_i$, or $\tilde x(e)$ attains the upper bound $b(e)$. In the latter case, $e$ is satiated w.r.t. $\tilde x$, and in the former case, applying Lemma~\ref{lm:LjLj}, step by step, through the sequence of iterations (starting from the 1st one, in which $L_j=\emptyset$), we obtain that $e$ belongs to the head $H(x_j)$, where $x$ is the function in the beginning of the 1st phase of the last iteration. Also $|x_j|=q(j)$ must hold, which implies $\tilde x_j=x_j$ (since the process terminates and the 1st stage does not decrease the current function). Therefore, $e\in H(\tilde x_j)$. 

Now assume that the edge $e$ does not belong to the head $H(\tilde x_j)$, and there holds $\tilde x(e)<b(e)$. Considering the last moment when the current function changes in  $E_i$, one can conclude that this happens at the 1st phase of some iteration, and moreover, at this moment, $e$ does not belong to the current set $L_i$. As a consequence, $e$ should be in the head $H(\tilde x_j)$.

Thus, in all cases $e$ is not blocking for $\tilde x$.      
\end{proof}

Now we are going to investigate the convergence of the process. In reality it may happen that the number of iterations is infinite; in this case the constructed functions converge to a limit, and one can see that this limit function is a stable assignment. Nevertheless, we can modify the process so as to obtain a finite algorithm with the number of iterations polynomial in $|E|$.

Before doing so, let us analyze the above method. We call an iteration \emph{positive} if at least one of the following events happens in it: (a) for some $i\in F$, the subset $L_i$ increases; (b) for some $j\in W$, the size (number of edges) $h_j$ of the head $H_j$ increases; (c) the number $\Pi$ of vertices $j\in W$ which attain the quota $q(j)$ increases (it is clear that after that the fully filling property for $j$ will be maintained).One can see that each of these parameters is monotone non-decreasing during the process, and therefore, the number of positive iterations does not exceed $2|E|+|W|$. 

Now consider a sequence $Q$ of non-positive iterations going in series. As before, we denote the current set of fully filled vertices in $W$ by $W^+$.

At the 1st phase of a current iteration among these, the sum of values of the current function over the edges increases by some positive amount  $\omega$. Note that if for some vertex $j\in W^+$, the value of the function increases on some edge $e=ij$, then this edge cannot belong to the head $H_j$ at the preceding iteration. For otherwise,  as a result of the cutting operation for $j$ at the 2nd phase of the current iteration, the value on $e$ decreases, and $e$ is added as a new element to the set $L_i$ (which is impossible since the iteration is non-positive). It follows that an increase in an edge $e=ij$ at the 1st phase of an iteration in $Q$ is possible only if $e$ is contained in the current tail $T_j$. At the same time, $j$ may belong to $W-W^+$, but if the current iteration is not the last one in $Q$, then in the tails of vertices in $W^+$, the total value must be increased by some positive amount $\omega'$. Obviously, $\omega'\le \omega$ (the equality may happen only if there is no contribution from $\omega$ to the vertices in $W-W^+$). To preserve the current set $W^+$ of fully filled vertices at the 2nd phase of the current iteration under the corresponding cutting operations, the total value in the heads $H_j$ of these vertices $j$ must be decreased by the same amount $\omega'$  (which in turn leads to an increase of ``underloading'' to the quotas in the vertices of $F$ by the same amount $\omega'$);
at the same time, these heads preserve, as well as the sets $L_i$. As a result, we come to the 1st phase of the next iteration with a smaller ``quota-underloading'' $\omega'$, and when such ``underloadings'' decrease geometrically, the sequence  $Q$ can continue infinitely (yet providing a convergence to a limit function, which yields a stable assignment).

Now using a certain aggregation, we can provide a finite convergence. For this aim, while preserving the above parameters $|L_i|$, $h_j$, $\Pi$, we introduce additional parameters. These are: (d) the sizes (numbers of edges) $h'_i$ of the heads $H_i$ of vertices $i$ in the current set $F^+$ of fully filled vertices in the part $F$; (e) the number $\rho$ of edges $e=ij$ in the tails $T_j$ for which the upper bound $b(e)$ is attained. One can see that, under preserving $|L_i|$, $h_j$, $\sigma$, the values $h'_i$ are monotone non-increasing, while $\rho$ is monotone non-decreasing. Therefore, the number of changes of such parameters in $Q$ is at most $2|E|$. We call a maximal subsequence in $Q$ with no changes of this sort \emph{homogeneous}.

Our trick consists in replacing a homogeneous subsequence $Q'$ by one ``big'' (aggregated) iteration. This is done as follows. In the beginning of  $Q'$, we distinguish two sets of edges: the set $A^+$ where the values increase at the 1st phase (then the edges in it go from the heads in $F$ to the tails in $W$), and the set $A^-$ where the values decrease at the 2nd phase (then the edges in it go to the heads in $W$). Denote the vector of the corresponding changes on $A^+\cup A^-$ during both phases of the first iteration in $Q'$ as $\delta$, and determine (which is rather straightforward) the maximal positive number $\xi$ for which the change by $\xi\delta$ is correct. More precisely, under the change, we should not exceed the bounds $b(e)$ of edges $e$ from $A^+$, as well as the quotas in the vertices of $F^+$ and $W-W^+$ (as to the vertices in $W^+$ and $F-F^+$, the first ones continue to be fully filled, and the values on edges for the second ones can merely decrease, but not increase, thus preserving the quota restrictions). 

One can check that the resulting function on $E$ obtained on a big iteration is well defined, that at least one of the above-mentioned five parameters changes, and that a big iteration can be implemented in $O(|E|)$ time. Thus, we obtain
 \begin{prop} \label{pr:modif}
The modified algorithm is finite, consists of $O(|E|^2)$ iteration (where each takes  $O(|E|)$ time), and finds a stable assignment for the diversifying two-sided market in question.
  \end{prop}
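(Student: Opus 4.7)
The plan is to verify the three checks the author defers (``One can check''): (i) the direction vector $\delta$ and step size $\xi$ of a big iteration are well-defined with $\xi>0$, (ii) every big iteration changes at least one of the five monitored parameters $|L_i|,h_j,\Pi,h'_i,\rho$, and (iii) a big iteration can be implemented in $O(|E|)$ time. Once these are granted, the iteration count and correctness fall out of elementary counting and Corollary~\ref{cor:stab}.

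For (i), the key observation is that inside a homogeneous subsequence $Q'$ the entire combinatorial structure of the two-phase update is frozen: the sets $F^+$, $W^+$, the head/tail partitions $(H_j,T_j)$ for $j\in W$ and $(H_i,T_i)$ for $i\in F^+$, the satiation set $U=\{e:x(e)=b(e)\}$, and the distinguished sets $L_i$ all remain fixed. Given this, the 1st phase raises the function affinely on a fixed set $A^+$ (tail edges at vertices $i\in F^+$ being drawn up from the frozen heads toward their upper bounds), and the 2nd phase trims affinely on a fixed set $A^-$ (head edges at $j\in W^+$ being cut down). This yields a common direction vector $\delta$ on $A^+\cup A^-$, and $\xi$ is the largest scalar for which $x+\xi\delta$ remains admissible: no capacity on $A^+$ is violated, no quota in $W-W^+$ is exceeded, and no frozen piece of the structure of $Q'$ is broken (no new head/tail exchange, no new $L$-insertion, no new member of $F^+$ or $W^+$). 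Positivity of $\xi$ is automatic, because the very first ordinary iteration of $Q'$ realized a strictly positive step in direction $\delta$.

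For (ii), maximality of $\xi$ forces one of the listed constraints to become tight at the end of the big iteration, and a straightforward case check maps each possibility onto a strict change of one of the five parameters: a newly saturated edge lying in some tail $T_j$ increases $\rho$; a quota filled at some $j\in W-W^+$ increases $\Pi$; a head/tail exchange at a $j\in W$ changes $h_j$; an exchange at an $i\in F^+$ changes $h'_i$; and a new $L$-insertion increases $|L_i|$. For (iii), tracing one ordinary iteration under fixed structure to obtain $\delta$ is $O(|E|)$, and $\xi$ is the minimum of $O(|E|)$ elementary ratios.

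The counts are then immediate. The parameters $|L_i|,h_j,\Pi$ are globally monotone, so the number of positive iterations is $O(|E|)$. Between two consecutive positive iterations there lies at most one sequence $Q$, and inside $Q$ the parameters $h'_i,\rho$ are monotone with total range $O(|E|)$, so the number of homogeneous subsequences --- and hence of big iterations --- in $Q$ is $O(|E|)$. Multiplying gives $O(|E|^2)$ iterations in total, each costing $O(|E|)$ time; Corollary~\ref{cor:stab} then certifies that the terminal function is a stable assignment. The main obstacle I anticipate is the bookkeeping behind (i): one must verify that inside a homogeneous subsequence every piece of combinatorial data that can affect the linear update is really invariant, so that the per-iteration increment is genuinely $\delta$ and replacing the subsequence by a single step of length $\xi$ is legitimate.
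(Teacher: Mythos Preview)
Your proposal is correct and follows the paper's own route exactly: the paper itself leaves the three points you label (i)--(iii) to the reader under ``One can check'', and your outline supplies precisely those checks together with the same two-level monotonicity count (global monotonicity of $|L_i|,h_j,\Pi$ bounding the number of $Q$'s by $O(|E|)$, and within-$Q$ monotonicity of $h'_i,\rho$ bounding the homogeneous pieces by $O(|E|)$). One small point worth making explicit when you write it up: Corollary~\ref{cor:stab} is stated for the unmodified process, so you should note that a big iteration produces the same terminal state as the homogeneous subsequence it replaces (this is immediate from your analysis in (i)), so that Lemmas~\ref{lm:xpj-xj} and~\ref{lm:LjLj} carry over to the modified process and the corollary applies.
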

  
Also since a stable assignment is unique, by Theorem~\ref{tm:unique}, we can conclude that the stable assignment constructed by the algorithm is optimal for both sides $F$ and $W$.


\section{Diversifications in a general graph} \label{sec:general}

The diversifying model on a two-sided (bipartite) market can be generalized in a natural way to an arbitrary graph $G=(V,E)$ equipped with capacities $b(e)\in\Rset_+$ on the edges $e\in E$ and quotas $q(v)\in\Rset_+$ on the vertices $v\in V$, and with the choice functions $C_v$ on subsets $E_v$, $v\in V$, that are defined, as before, by rule~\refeq{z-q}. Here all definitions of a local character (concerning sets $E_v$ or set pairs $\{E_u,E_v\}$ for edges $\{u,v\}\in E$) are analogous to those in the bipartite case.

A popular method for problems on stability (and wider) for non-bipartite graphs consists in a reduction to a symmetric, or ``self-dual'', bipartite graph by use of splitting (doubling) vertices and edges (as an example, one can mention the reduction of the stable roommates problem (introduced and studied by Irving~\cite{irv}) to the classical stable marriage problem~\cite{GS} as demonstrated in~\cite{hsu}).

Let us briefly describe a reduction of this sort in our case. Each vertex $v\in V$ generates two copies $v_1$ and $v_2$, and each edge $\{u,v\}\in E$ two edges $u_1v_2$ and $v_1u_2$. As a result, we obtain bipartite graph $\Gscr=(\Vscr,\Escr)$ with vertex parts $V_k=\{v_k\colon v\in V\}$, $k=1,2$. Denote the natural symmetry (involution) on $\Vscr\cup\Escr$ by $\sigma$; namely, $\sigma(v_k)=v_{3-k}$, $k=1,2$, and $\sigma(u_1v_2)=v_1u_2$. The capacities and quotas for $G$ generate symmetric capacities and quotas for $\Gscr$; namely, $b(u_1v_2):=b(\{u,v\})$ for $\{u,v\}\in E$ and $q(v_k):=q(v)$ for $v\in V$. Accordingly, an (admissible, rational, stable) assignment $x:\Escr\to\Rset_+$ is called symmetric if $x(e)=x(\sigma(e))$ for all $e\in\Escr$.

Denote by $\pi$ the natural projection that sends each edge  $u_1v_2\in \Escr$ to the edge $\{u,v\}\in E$. The following correspondence is easy:
  \begin{numitem1} \label{eq:biject}
the map $x(e)\mapsto \tilde x(e')$, where $e\in E$ and $e'\in\pi^{-1}(e)$, establishies a bijection between stable assignments $x$ for $G$ and symmetric stable assignment $\tilde x$ for $\Gscr$.
  \end{numitem1}

By Theorem~\ref{tm:unique}, $\Gscr$ has a unique stable assignment $x'$. Form the function $x''$ symmetric to $x'$, i.e. satisfying $x''(e)=x'(\sigma(e))$ for all $e\in \Escr$. Obviously, $x''$ is a stable assignment as well. Therefore, in light of the uniqueness, $x''$ must coincide with $x'$. This means that $x'$ is (self)symmetric, and we obtain
  \begin{corollary} \label{cor:unique}
For any graph $G=(V,E)$, capacities $b:E\to\Rset_+$, quotas $q:V\to\Rset_+$, and diversifying choice functions as above, there exists exactly one stable assignment.
  \end{corollary}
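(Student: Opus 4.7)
The plan is to reduce the stability problem on a general graph $G$ to the bipartite case, where Theorem~\ref{tm:unique} is available, and then exploit the natural symmetry of the reduction to force uniqueness.

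First I would formalize the bipartite ``double cover'' $\Gscr=(\Vscr,\Escr)$ sketched above: each $v\in V$ splits into $v_1,v_2$, each $\{u,v\}\in E$ splits into $u_1v_2$ and $v_1u_2$, and the capacities, quotas, and diversifying choice functions lift symmetrically (so that the involution $\sigma$ is an automorphism of all the data). Next I would verify the bijection asserted in~\refeq{biject}: given a stable assignment $x$ on $G$, the lift $\tilde x(e'):=x(\pi(e'))$ is symmetric by construction, admissible, and rational at every $v_k$ because the head/tail structure of $\tilde x_{v_k}$ is just a copy of that of $x_v$ (the cutting height in~\refeq{z-q} depends only on the local values); stability transfers because the blocking condition~\refeq{stabx} is local at the two endpoints, and the endpoints of $u_1v_2$ in $\Gscr$ carry exactly the same local information as the endpoints of $\{u,v\}$ in $G$. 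Conversely, any symmetric stable assignment on $\Gscr$ projects to a well-defined stable assignment on $G$ by the same local argument.

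With the bijection in hand, Theorem~\ref{tm:unique} gives a \emph{unique} stable assignment $x'$ on the bipartite graph $\Gscr$. The decisive step is then a symmetry argument: since $\sigma$ preserves $(\Gscr,b,q,C)$, the reflected function $x''(e):=x'(\sigma(e))$ is again a stable assignment on $\Gscr$, so uniqueness forces $x''=x'$. Hence $x'$ is itself $\sigma$-symmetric and, via the bijection, corresponds to a (necessarily unique) stable assignment on $G$. Existence is automatic from existence of $x'$, and uniqueness follows because any two stable assignments on $G$ would lift to two distinct symmetric stable assignments on $\Gscr$, contradicting Theorem~\ref{tm:unique}.

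The main obstacle I expect is the careful verification of~\refeq{biject}, in particular checking that the lift of a $G$-stable assignment really satisfies the stability condition~\refeq{stabx} at \emph{both} copies of every vertex, and that the choice-function structure at $v_k$ in $\Gscr$ faithfully mirrors that at $v$ in $G$. Once this local compatibility is established, the symmetry/uniqueness argument is immediate and the corollary follows with essentially no further work.
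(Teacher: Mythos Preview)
Your proposal is correct and follows essentially the same argument as the paper: build the symmetric bipartite double cover $\Gscr$, invoke the bijection~\refeq{biject}, apply Theorem~\ref{tm:unique} to get a unique stable $x'$ on $\Gscr$, and then observe that $x''(e):=x'(\sigma(e))$ is also stable, forcing $x'=x''$ and hence symmetry. The paper treats the verification of~\refeq{biject} as routine (declaring it ``easy''), so your flagging it as the only point requiring care is appropriate but not a departure from the paper's line.
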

  
An example of reducing to the bipartite case is illustrated in Fig.~\ref{fig:exam}. Here the left fragment shows the initial graph $G$ with vertices $a,b,c$; the numbers by vertices indicate their quotas, the left numbers by edges indicate their capacities, while the right numbers indicate the values of stable assignment. The right fragment shows the corresponding bipartite graph $\Gscr$. Note that the vertices $a,b$  are fully filled, whereas $c$ is not.

 \begin{figure}[htb]
\begin{center}
\vspace{-0cm}
\includegraphics[scale=0.8]{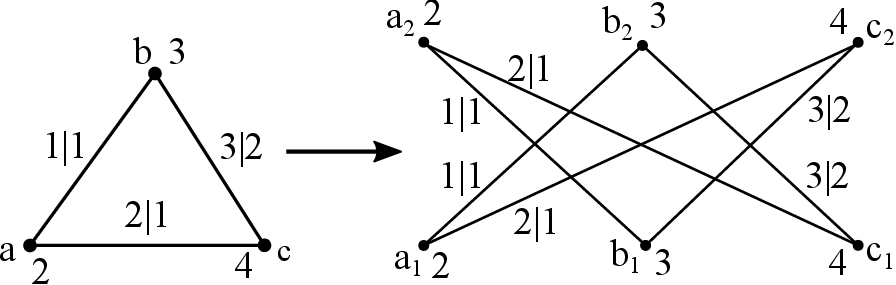}
\end{center}
\vspace{-0.3cm} 
\caption{Reduction to the bipartite case}
 \label{fig:exam}
\end{figure}

\textbf{A note of August 28, 2023:} Yesterday Vladimir Danilov told me a proof of a generalization on hypergraphs. More precisely, for a finite hypergraph $H=(V,E)$ with hyper-edge capacities (upper bounds) $b\in\Rset^E_+$, vertex quotas $q\in\Rset^V_+$, and diversifying choice functions $C_v$ on the sets $E_v$ of hyper-edges incident to vertices $v\in V$ (defined similarly to~\refeq{z-q}), one shows the existence of a unique stable assignment $x$ on $E$. An idea of proof is as follows. For each vertex $v\in V$, take the capacity restriction $b_v:=b\rest{E_v}$, define $z_v:=C_v(b_v)$, and let $m_v$ be the minimum of values $z_v(e)$ among $e\in E_v$. Let $m:=\min\{m_v\colon v\in V\}$ and choose a hyper-edge $e_0$ for which $z_v(e_0)=m_v=m$, where $v$ is (some) vertex incident with $e_0$. The following key property is proved (which is not difficult): for any stable assignment $x$ for $(H,b,q,C)$, the value $x(e_0)$ is equal to $m$. This provides a recursive method: namely: reduce $H$ to $H'=(V,E-\{e_0\})$ and update the quotas $q(v)$ for the vertices $v$ incident with $e_0$ as $q'(v):=q(v)-m$. By induction, the reduced problem has a unique stable solution $x$. One shows that extending $x$ to $e_0$ as $x(e_0):=m$, we obtain a stable assignment for the initial hypergraph. Note that the number of iterations (recursion steps) in the process is $O(|E|)$, and each iteration takes $O(|V| |E|)$ operations.


\begin{thebibliography}{99}
 %

\bibitem{AG} A.~Alkan and D.~Gale, Stable schedule matching under revealed preference. \textsl{J.~Economic Theory} \textbf{112} (2003) 289--306.
 %
 \bibitem{BB}
M.~Baiou and M.~Balinski, Erratum: the stable allocation (or ordinal
transportation) problem. \textsl{Math. Oper. Res.} \textbf{27} (4) (2002)
662--680.
 %
\bibitem{GS}
D.~Gale and L.S.~Shapley, College admissions and the stability of marriage.
\textsl{Amer. Math. Monthly} \textbf{69} (1) (1962) 9--15.
 %
\bibitem{hsu} Y.-C.~Hsueh, A unifying approach to the structures of the stable matching problems. \textsl{Computers Math. Appl.} \textbf{22} (6) (1991) 13--27.
%
\bibitem{irv}
R.W.~Irving, An efficient algorithm for the “stable roommates” problem.
\textsl{J. Algorithms} \textbf{6} (1985) 577--595.
 %

\Xcomment{


%
\bibitem{DM}
B.C.~Dean and S.~Munshi, Faster algorithms for stable allocation problems.
\textsl{Algorithmica} \textbf{58} (1) (2010) 59--81.
  %

\bibitem{IL}
R. W. Irving, P. Leather, The complexity of counting stable marriages.
\textsl{SIAM J. Comput.} \textbf{15} (1986) 655--667.
 %
\bibitem{ILG}
R. W. Irving, P. Leather, D. Gusfield, An efficient algorithm for the optimal
stable marriage problem. \textsl{J. ACM} \textbf{34} (1987) 532--543.
%

\bibitem{knu}
D.E. Knuth, Mariages stables. \textsl{Les Presses de l’Universit\'e de
Montreal}, Montreal, 1976.
  %
\bibitem{MW70}
D. McVitie, L.B. Wilson, Stable marriage assignments for unequal sets.
\textsl{BIT} \textbf{10} (1970) 295--309.
%
\bibitem{MW71}
D. McVitie, L.B. Wilson, The stable marriage problem. \textsl{Commun. ACM}
\textbf{14} (1971) 486--492.
%
\bibitem{pic}
J. Picard,  Maximum closure of a graph and applications to combinatorial
problems. \textsl{Manage. Sci.} \textbf{22} (1976) 1268--1272.
 %
 \bibitem{pol}
G. Polya, R.E. Tarjan, D.R. Woods. \textsl{Notes on Introductory
Combinatorics}, Birkbauser Verlag, Boston, Mass., 1983.
  %
  \bibitem{PB}
J.S. Provan, M.O. Ball, The complexity of counting cuts and of computing the
probability that a graph is connected. \textsl{SIAM J. Comput.} \textbf{12}
(1983) 777--788.
 %
\bibitem{rot}
U.G. Rothblum, Characterization of stable matchings as extreme points of a
polytope. \textsl{Math. Programming} \textbf{54} (1992) 57--67.
  %
\bibitem{RRV}
A.E. Roth, U.G. Rothblum, J.H. Vande Vate, Stable matching, optimal assignments
and linear programming. \textsl{Math. Oper. Res.} \textbf{18} (1993) 808--828.
 %
  \bibitem{sch}
A.~Schrijver, \textsl{Combinatorial Optimization}, Vol.~A, Springer, 2003.
 %
\bibitem{tan}
J.~Tan, A necessary and sufficient condition for the existence of a complete
stable matching. \textsl{J. Algorithms} \textbf{12} (1991) 154--178.
 %
\bibitem{TS}
C.P. Teo, J. Sethuraman. The geometry of fractional stable matchings and its
applications. \textsl{Math. Oper. Res.} \textbf{23} (4) (1998) 874--891.
 %
 \bibitem{VV}
J.H. Vande Vate, Linear programming brings marital bliss. \textsl{Oper. Res.
Lett.} \textbf{8} (1989) 147--153.

 }

\end{thebibliography}
\end{document}